\documentclass[12pt,twoside]{article}
 
\usepackage[margin=1in]{geometry} 
\usepackage{amsmath,amsthm,amssymb}
\usepackage{amsfonts}
\usepackage{hyperref}
\usepackage{fancyhdr}
\usepackage{graphicx}
\usepackage{tikz}
\usepackage{float}
\usepackage{indentfirst}
\usetikzlibrary{decorations.pathreplacing}
\newenvironment{theorem}[2][Theorem]{\begin{trivlist}
\item[\hskip \labelsep {\bfseries #1}\hskip \labelsep {\bfseries #2.}]}{\end{trivlist}}

\newenvironment{definition}[2][Definition]{\begin{trivlist}
\item[\hskip \labelsep {\bfseries #1}\hskip \labelsep {\bfseries #2.}]}{\end{trivlist}}
\newtheorem{remark}{Remark}[section]
\pagestyle{fancy}
\fancyhf{}
\fancyhead[CO]{\textsc{Owen Drummond} \hfill \thepage} 
\fancyhead[CE]{\textsc{The Top Dimensional Singular Set} $\mathrm{sing}_{*}u$  \hfill \thepage} 
\bibliographystyle{plain}

\begin{document}
 
 
\title{The Top Dimensional Singular Set $\text{sing}_{*}u$} 
\author{
  Owen Drummond \\
  Department of Mathematics, Rutgers University \\
  \texttt{owen.drummond@rutgers.edu} \\
  \\
  \small{Advised by: Natasa Sesum} \\
}
\maketitle
\begin{abstract}
    In this survey, we examine the properties of the top dimensional singular set $\text{sing}_{*}(u)$, including several theorems, geometric properties, and relation to homogeneous degree zero minimizers.
\end{abstract}
\tableofcontents
\section{Introduction}
In geometric analysis, energy minimizing maps have been a prominent area of study for nearly 40 years, particularly in the 1980's and 1990's. One specific area of interest within this topic is the structure of the singular set $\text{sing}(u)$, where $u \in W^{1,2}(\Omega; N)$ is energy minimizing, $\Omega \subset \mathbb{R}^n$, and N is a Riemannian target manifold isometrically embedded into some Euclidean Space $\mathbb{R}^p$. For understanding the behavior of energy minimizing maps near singular points, one investigates the nature of tangent maps, that is, maps that the extrapolate the local behavior of an energy minimizing map to a global scale by convergence along a subsequence. Such convergence is guaranteed by several key compactness results about the Sobolev Space $W^{1,2}$, yet much remains unknown about tangent maps. The question of uniqueness of tangent maps is classically quite a difficult problem, both in this context, and in the geometric flows as well, namely mean curvature flow and Ricci flow.

In his notable 1996 work \textit{Theorems on Regularity and Singularity of Energy Minimizing Maps}\cite{simon1996theorems}, Leon Simon investigates this question of uniqueness from the perspective of geometric analysis and measure theory. In Chapter 3 of his work, which is concerned with the structure of $\text{sing}(u)$, Simon introduces the Top Dimensional Singular Set $\text{sing}_{*}(u)$, which is concerned with tangent maps of dimension $(n-3)$. This set is fundamental in his discussion of the uniqueness of tangent maps under certain prescribed conditions, such as in Section 3.8. This paper follows Simon's book and will serve as an exposition of this set $\text{sing}_{*}(u)$, including key properties and theorems. 
\section{Preliminaries}
One can reference my paper \textit{Geometric Analysis of Energy Minimizing Maps}\cite{drummond2024geo} to study these preliminaries in greater detail. We will assume that the reader is familiar with basic Real Analysis and PDE Theory. The general framework which we will operate under remains the same: let $\Omega$ is an open subset of $\mathbb{R}^n$, for $n \geq 2$, and that $N$ is a compact, $C^{\infty}$  Riemannian Manifold which is isometrically embedded into $\mathbb{R}^p$ for some $p$. Assume $u: \Omega \rightarrow \mathbb{R}^p$ such that $u(\Omega) \subset N$. For $\Omega$ and $N$ defined in this way, we denote the Sobolev Space $W_{\text{loc}}^{1,2}(\Omega; N)$ to be the set of functions $u \in W_{\text{loc}}^{1,2}(\Omega; \mathbb{R}^p)$ with $u(x) \in N$ a.e. $x \in \Omega$.
\subsection{Energy and \texorpdfstring{$\epsilon$}{text}-regularity}
\begin{definition}{2.1.1 (The Energy Functional)}
The energy $\mathcal{E}_{B_\rho (y)}(u)$ for a function $u \in W_{\text{loc}}^{1,2}(\Omega; N)$ in a ball $B_\rho (y) = \{ x: |x-y|<\rho \}$ with $\bar{B}_{\rho} \subset \Omega$ is 
\[
\mathcal{E}_{B_\rho (y)}(u)=\int_{B_\rho (y)} |Du|^2
\]
\end{definition}
\begin{definition}{2.1.2 (Energy Minimizing Maps)}
A map $u \in W^{1,2}(B_{\rho}(y); N)$ is called \textit{energy minimizing} if for each ball $B_\rho (y) \subset \Omega$
\[
\mathcal{E}_{B_\rho (y)}(u) \leq \mathcal{E}_{B_\rho (y)}(w) 
\]
for every $w \in W^{1,2}(B_\rho (y); N)$ with $w=u$ in a neighborhood of $\partial B_{\rho}(y)$. 
\end{definition}
This gives way to the following fact: a map $\psi$ is harmonic if it is a critical point of the energy functional. More specifically, let $\bar{B}_\rho(y) \subset \Omega$ as before, and for any $\delta > 0$, we have the 1-parameter family $\{\psi_s \}_{s \in (-\delta, \delta)}$ such that $\psi_0=\psi$, $D\psi_s \in L^2(\Omega)$, and $\psi_s \equiv \psi$ in a neighborhood of $\delta B_{\rho}(y)$ for all $s \in (-\delta, \delta)$. Thus, any such $\psi_s$ is harmonic if and only if
\[
\frac{d\mathcal{E}_{B_\rho(y)}(\psi_s)}{ds}|_{s=0}=0
\]
this same equation leads to the Euler-Lagrange equations for the map, which are harmonic map equations. As a consequence, harmonic maps are also local minimizers of the energy functional, that is, in an $\epsilon$-neighborhood of a map, no other map has lower energy than a harmonic map. However, there is no such global condition for harmonic maps as energy minimizers.
\par Here is an important theorem of Schoen and Uhlenbeck \cite{schoen1982regularity} that bears on the regularity of energy minimizing maps and will be employed several times throughout this paper:
\begin{theorem}{2.1.3 ($\epsilon$-Regularity Theorem)}
    Let $\Lambda > 0$, $\theta \in (0, 1)$. There exists $\varepsilon = \varepsilon(n, \Lambda, N, \theta) > 0$ such that if $u \in W^{1,2}(\Omega; N)$ is energy minimizing on $B_R(x_0) \subset \Omega$ and if
    \[
    R^{2-n} \int_{B_R(x_0)} |Du|^2 \leq \Lambda \quad \text{and} \quad R^{2-n} \int_{B_R(x_0)} |u - \lambda_{x_0, R}|^2 < \varepsilon^2,
    \]
    then there holds $u \in C^\infty(B_{\theta R}(x_0))$, and for $j = 1, 2, \ldots$ we have the estimates
    \[
    R^j \sup_{B_{\theta R}(x_0)} |D^j u| \leq C (R^{-n} \int_{B_R(x_0)} |u - \lambda_{x_0, R}|^2)^{1/2},
    \]
    where $C$ depends only on $j$, $\Lambda$, $N$, $\theta$, and $n$.
\end{theorem}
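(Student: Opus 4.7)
My plan is to proceed by a standard scaling and blow-up argument in the spirit of Schoen-Uhlenbeck. Using the translation and scaling $\tilde u(x) = u(x_0 + Rx)$, I first reduce to the normalized case $x_0 = 0$, $R = 1$, in which the hypotheses become $\int_{B_1}|Du|^2 \le \Lambda$ and $\int_{B_1}|u - \lambda|^2 < \varepsilon^2$ with $\lambda = \lambda_{0,1}$; the factor $R^j$ in the derivative estimate reappears upon undoing the rescaling. The heart of the argument is then to establish, by contradiction, a decay lemma of the form: for any small enough $\varepsilon$, on an interior ball $B_{\theta'}$ the normalized $L^2$ excess of $u$ decays at the linear rate predicted by the formal linearization of the harmonic map equation at a constant map into $N$.

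Suppose no such $\varepsilon$ works. Then there is a sequence of energy minimizers $u_k$ on $B_1$ with averages $\lambda_k$, $\int|Du_k|^2 \le \Lambda$, and $\varepsilon_k := (\int|u_k - \lambda_k|^2)^{1/2} \to 0$, violating the desired excess decay. By compactness of $N$ a subsequence gives $\lambda_k \to \lambda_\infty \in N$. I introduce the blow-ups $v_k := \varepsilon_k^{-1}(u_k - \lambda_k)$, normalized so that $\int_{B_1}|v_k|^2 = 1$. The goal is to show that, along a subsequence, $v_k$ converges on interior balls to a function $v_\infty$ solving a linear elliptic system with constant coefficients; classical interior regularity for such systems then furnishes an excess-decay estimate for $v_\infty$ strong enough, once transferred back, to contradict the failure assumed for $u_k$.

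The main obstacle is the linearization step. Each $u_k$ satisfies the harmonic map system $\Delta u_k + A(u_k)(Du_k, Du_k) = 0$, whose quadratic-in-gradient nonlinearity is not continuous under weak $W^{1,2}$ convergence. To handle it I invoke a reverse Poincar\'e (Caccioppoli-type) inequality for energy minimizers, which bounds $\int_{B_{\theta'}}|Du_k|^2$ by $C\int_{B_1}|u_k - \lambda_k|^2$; combined with $\varepsilon_k \to 0$ this upgrades to strong $W^{1,2}_{\text{loc}}$ convergence of $u_k$ to the constant $\lambda_\infty$. Dividing the PDE by $\varepsilon_k$ and passing to the limit then kills the nonlinear term, leaving a linear constant-coefficient elliptic system for $v_\infty$ whose $T_{\lambda_\infty}N$-component is harmonic. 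The resulting excess decay, combined with a Campanato-type iteration, yields H\"older continuity of $u$ on $B_\theta$; once $u$ is H\"older continuous the harmonic map equation has H\"older coefficients, and a standard Schauder bootstrap upgrades this to $C^\infty$ with the stated derivative bounds, producing constants with exactly the asserted dependence on $j$, $n$, $\Lambda$, $N$, and $\theta$.
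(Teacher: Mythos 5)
The paper does not actually prove this theorem: it is stated as a known result of Schoen and Uhlenbeck and supported only by the citation \cite{schoen1982regularity}, so there is no in-paper argument to compare against. Your outline is the standard harmonic-approximation/blow-up proof (essentially the route taken in Schoen--Uhlenbeck and in Simon's own exposition), and as an outline it is correct: rescale to $B_1$, argue excess decay by contradiction, blow up by $v_k=\varepsilon_k^{-1}(u_k-\lambda_k)$, use the reverse Poincar\'e inequality to pass to a harmonic limit, iterate \`a la Campanato to get H\"older continuity, then bootstrap.

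Two caveats on where the real work is hiding. First, the reverse Poincar\'e inequality $\int_{B_{\theta'}}|Du|^2\le C\int_{B_1}|u-\lambda|^2$, which you invoke in a single clause, is the genuinely hard ingredient for maps constrained to lie in $N$: one must build a comparison map that agrees with $u$ near the boundary sphere, is nearly constant inside, and still takes values in $N$ (the Luckhaus-type interpolation/gluing lemma). Without proving that, the whole compactness and linearization step is unsupported; it deserves to be named as a lemma with at least a sketch of the comparison construction. Second, the final step is not ``bare'' Schauder: with only $u\in C^{0,\alpha}$ the right-hand side $-A(u)(Du,Du)$ of the harmonic map system is not H\"older, so you first need the Morrey-type decay $\int_{B_\rho}|Du|^2\le C\rho^{n-2+2\alpha}$ (which your Campanato iteration does supply) to upgrade to $C^{1,\alpha}$, and only then does Schauder iteration give $C^\infty$; the linear dependence of the $j$-th derivative bound on $\bigl(R^{-n}\int_{B_R}|u-\lambda_{x_0,R}|^2\bigr)^{1/2}$ then follows because the quadratic gradient term is superlinearly small in the excess. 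Neither point is a wrong turn, but both are the places a referee would ask you to fill in.
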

Essentially, this theorem states that if the average deviation from an approximate mean of an energy miniziming map is sufficiently small within some ball $B_{R}(x_0)$, then $u$ must be quite regular within a smaller ball $B_{\theta R}(x_0)$. This is foundational as it bridges the gap between local behavior and global regularity, directly impacting our understanding of singularities in geometric flows.

\subsection{Tangent Maps and sing(u)}

Next, we introduce the notion of tangent maps. In essence, these maps extrapolate local behavior of some energy minimizing map to a global function. Given $u:\Omega \rightarrow \mathbb{R}^p$ and $B_{\rho_0}(y)$ such that $\bar{B}_{\rho_0}(y) \subset \Omega$, and for any $\rho>0$, consider the scaling function $u_{y,\rho}$ given by 
\[
u_{y,\rho}(x)=u(y+\rho x)
\]
Note that on $B_{\rho_0}(0)$, $u_{y,\rho}$ is well defined. For $\sigma > 0$ and $\rho < \frac{\rho_0}{\sigma}$, after making a change of variables with $\Tilde{x}=y+\rho x$ in the energy integral for $u_{y,\rho}$ and noting that $Du_{y,\rho}(x)=\rho(Du)(y+\rho x)$, we have
\begin{align}
\sigma^{2-n} \int_{B_\sigma(0)} |Du_{y,\rho}|^2 = (\sigma \rho)^{2-n} \int_{B_{\sigma \rho}(y)} |Du|^2 \leq \rho_0^{2-n} \int_{B_{\rho_0}(y)} |Du|^2
\end{align}
by the Monotonicity Formula. Therefore, if $\rho_j \downarrow 0$, then $\limsup_{j \rightarrow \infty} \int_{B_{\sigma}(0)} |Du_{y,\rho_j}|^2 < \infty$ for all $\sigma > 0$, and so by the compactness, there is a subsequence $\rho_{j'}$ such that $u_{y,\rho_{j'}} \rightarrow \varphi$ locally in $\mathbb{R}^n$ w.r.t. the $W^{1,2}$-norm.

\begin{definition}{2.2.1 (Tangent Map)}
    Any $\varphi$ obtained this way is called a \textit{tangent map of u at y}. Moreover, $\varphi: \mathbb{R}^n \rightarrow N$ is an energy minimizing map with $\Omega = \mathbb{R}^n$.
\end{definition}
Simon's approach to understanding the properties of tangent maps is studying \textit{degree zero minimizers}, a broader class of functions which tangent maps belong.

\begin{definition}{2.2.2 (Degree Zero Minimizer)}
    A function $\varphi: \mathbb{R}^n \rightarrow N$ is a \textit{degree zero minimizer} if $\varphi(\lambda x) \equiv \varphi(x)$ for all $\lambda > 0$, $x \in \mathbb{R}^n$
\end{definition}

More on the properties of degree zero minimizers can be found in my previous paper. 
\begin{definition}{2.2.3 (Regular and Singular Sets)}
    If $u \in W^{1,2}(\Omega: \mathbb{R}^p)$, then 
\[
\text{reg}(u) := \{ x \in \Omega : u \text{ is } C^{\infty} \text{ in a neighborhood of } x \}
\]
is the regular set of $u$, and 
\[
\text{sing}(u):=\Omega \setminus \text{reg}(u)
\]
is the singular set of $u$.
\end{definition}
The singular set $\text{sing}(u)$ is topic of great interest in the study of energy minimizing maps due to its obscure geometric structure. Certain techniques from geometric measure theory, primarily rectifiabilty and gap measures, have proven to be particularly effective, and are explored in Simon's work.

\section{Definition of \texorpdfstring{$\text{sing}_{*}u$}{text}}
As seen in my previous work (p. 13)\cite{drummond2024geo}, $\text{sing}(u)$ consists of only isolated points, that is, $\mathcal{H}^{n-2}(\text{sing}(u))=0$. Thus, $\text{sing}(u)$ is best understood in the context of codimension 3, that is, dimension $n-3$. In 2015, Aaron Naber and Daniele Valtorta\cite{naber2015rectifiable} have proven the remarkable result that for a smooth, compact target manifold $N$, $\mathcal{H}^{n-3}(\text{sing}(u)) < \infty$ uniformly, and this result was previously unknown for some time. This was proven using modern technique of quantifiable rectifiability in geometric measure theory. 
\par Here, we are concerned with target manifolds $N$ of dimension $n-3$. However, this discussion is still meaningful in the case of dimension $m \leq n-4$ as long as $N$ happens to be such that all homogeneous degree zero (tangent) maps $\varphi \in W_{\text{loc}}^{1,2}(\mathbb{R}^n;N)$ of $u$ satisfy $\dim S(\varphi) \leq m$.
\begin{definition}{3.1 (Top Dimensional Singular Set)}
    The top dimensional part $\text{sing}_{*}(u)$ of $\text{sing}(u)$ is the set of points $y \in \text{sing}(u)$ such that some tangent map $\varphi$ of $u$ at $y$ has $\dim S(\varphi) = n-3$.
\end{definition}
from this we obtain the immediate result:
\begin{theorem}{3.2}
    For $\Omega \subset \mathbb{R}^n$ open and $u \in W^{1,2}(\Omega; N)$,
    \[
    \dim(\text{sing}(u) \setminus \text{sing}_{*}(u)) \leq n-4
    \]
\end{theorem}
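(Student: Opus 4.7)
The plan is to prove this by Federer's dimension reduction argument. The key characterization is that $y \in \text{sing}(u) \setminus \text{sing}_{*}(u)$ if and only if every tangent map $\varphi$ of $u$ at $y$ satisfies $\dim S(\varphi) \leq n-4$, since for a homogeneous degree zero minimizer the spine dimension cannot exceed $n-3$ in the first place. The goal is to upgrade this pointwise information to a global dimension bound via a density point and blow-up contradiction.

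Suppose toward contradiction that $\dim(\text{sing}(u) \setminus \text{sing}_{*}(u)) > n-4$, so that $\mathcal{H}^\alpha(\text{sing}(u) \setminus \text{sing}_{*}(u)) > 0$ for some $\alpha > n-4$. A standard density lemma in geometric measure theory then produces a point $y_0 \in \text{sing}(u) \setminus \text{sing}_{*}(u)$ of positive upper $\alpha$-density, i.e. a sequence $\rho_j \downarrow 0$ with
\[
\rho_j^{-\alpha}\,\mathcal{H}^\alpha\!\left((\text{sing}(u) \setminus \text{sing}_{*}(u)) \cap B_{\rho_j}(y_0)\right) \geq c > 0.
\]
Applying the scaling $u_{y_0,\rho_j}$ of Section 2.2 and extracting a subsequence, $u_{y_0,\rho_{j'}} \to \varphi$ locally in $W^{1,2}$, where $\varphi$ is a tangent map of $u$ at $y_0$; by our choice of $y_0$, $\dim S(\varphi) \leq n-4$. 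The scaling identity $\text{sing}(u_{y_0,\rho_j}) = \rho_j^{-1}(\text{sing}(u) - y_0)$ combined with the upper semicontinuity of singular sets under $W^{1,2}$-convergence of minimizers (a consequence of the $\epsilon$-regularity theorem, Theorem 2.1.3) transfers the density bound to the limit, giving $\mathcal{H}^\alpha(\text{sing}(\varphi) \cap \bar{B}_1) > 0$.

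The contradiction is then closed by showing $\dim \text{sing}(\varphi) \leq \dim S(\varphi) \leq n-4$. Since $\varphi$ is degree zero, $\text{sing}(\varphi)$ is a cone containing $S(\varphi)$, and iterating the same blow-up argument on the cross section $\text{sing}(\varphi) \cap S^{n-1}$ shows that the symmetry dimension caps the Hausdorff dimension of the full singular cone. This contradicts the positive $\mathcal{H}^\alpha$-bound with $\alpha > n-4$. The principal obstacle in implementing this sketch is the semicontinuity step: converting a lower bound on the $\mathcal{H}^\alpha$-content of the rescaled singular sets into the corresponding lower bound on $\mathcal{H}^\alpha(\text{sing}(\varphi))$. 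In full generality this requires either an explicit covering argument built from $\epsilon$-regularity at a uniform quantitative scale, or an appeal to the Naber--Valtorta rectifiability estimates cited in Section 3; once this step is secured, the rest of the proof is the familiar Federer bookkeeping.
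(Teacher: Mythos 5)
Your reduction to the statement ``every tangent map at a point of $\text{sing}(u)\setminus\text{sing}_{*}(u)$ has $\dim S(\varphi)\leq n-4$'' is the right starting point, and it is the same observation the paper makes: it says precisely that $\text{sing}(u)\setminus\text{sing}_{*}(u)\subset\mathcal{S}_{n-4}$. The paper then simply quotes Lemma 1 of Section 3.4 of Simon ($\dim\mathcal{S}_j\leq j$), whereas you attempt to reprove that lemma by Federer dimension reduction; unfortunately your version of the reduction breaks at the final step. The claim $\dim\text{sing}(\varphi)\leq\dim S(\varphi)$ is false for homogeneous degree zero minimizers: $S(\varphi)$ is only the set of points where the density attains its maximal value $\Theta_\varphi(0)$, i.e.\ the subspace of translation invariance, and $\text{sing}(\varphi)$ may be strictly larger. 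Nothing prevents a homogeneous minimizer with $S(\varphi)=\{0\}$ from having additional singular rays through the origin, coming from singularities of $\varphi|_{S^{n-1}}$ at points of submaximal density; in general one only has $\dim\text{sing}(\varphi)\leq n-3$. Consequently, transferring the $\mathcal{H}^\alpha$ lower bound onto $\text{sing}(\varphi)\cap\bar{B}_1$, as you do, produces no contradiction for $n-4<\alpha\leq n-3$: as written, your argument only rules out $\alpha>n-3$, which is vacuous.

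The repair is Almgren's refinement of the Federer argument: one must not transfer the whole singular set under blow-up, but only the points of almost-maximal density. Partition the set by density values, choose a piece of positive $\mathcal{H}^\alpha$ measure and a point $y_0$ of positive upper $\alpha$-density of that piece; upper semicontinuity of $\Theta$ under the blow-up then forces the accumulation set of the rescaled points into $\{z:\Theta_\varphi(z)\geq\Theta_\varphi(0)-\epsilon\}$, which is tied to $S(\varphi)$ rather than to $\text{sing}(\varphi)$, and one iterates blow-ups to raise the spine dimension until this contradicts $\dim S\leq n-4$. That bookkeeping is exactly the content of the lemma the paper invokes, so unless you intend to carry it out in full, the cleaner route is the paper's: record the inclusion $\text{sing}(u)\setminus\text{sing}_{*}(u)\subset\mathcal{S}_{n-4}$ and cite $\dim\mathcal{S}_j\leq j$.
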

\begin{proof}
    Given the fact that $\text{sing}(u)=\mathcal{S}_{n-3}$, for $y \in \text{sing}(u) \setminus \text{sing}_{*}(u)$, any tangent map $\varphi$ of $u$ at $y$ must satisfy 
    \[
    \dim S(\varphi) = \dim \{y \in \mathbb{R}^n: \Theta_{\varphi}(y)=\Theta_{\varphi}(0) \} = n-3
    \]
    Putting these two facts together, it must be the case that this same tangent map $\varphi$ associated with $y \in \text{sing}(u) \setminus \text{sing}_{*}(u)$ must have $\dim S(\varphi) \leq n-4$, and hence 
    \[
    \text{sing}(u) \setminus \text{sing}_{*}(u) \subset \mathcal{S}_{n-4}
    \]
    Now by Lemma 1 of Section 3.4 of Simon (p. 54)\cite{simon1996theorems}, which states that for each $j=0,...,n-3$, $\dim \mathcal{S}_j \leq j$, he result follows, namely
    \[
    \dim(\text{sing}(u) \setminus \text{sing}_{*}(u)) \leq \dim(\mathcal{S}_{n-4}) \leq n-4
    \]
    as desired.

\end{proof}
To continue our discussion of $\text{sing}_{*}(u)$, we will examine homogeneous degree zero minimizers $\varphi: \mathbb{R}^n \rightarrow N$ with $\dim S(\varphi)=n-3$.

\section{H.D.Z.M.'s with \texorpdfstring{$\dim S(\varphi)=n-3$}{text}}
Let $\varphi: \mathbb{R}^n \rightarrow N$ be an arbitrary homogeneous degree zero minimizer (H.D.Z.M.) with $\dim S(\varphi)=n-3$. Now modulo an orthogonal transformation of $\mathbb{R}^n$ which takes $S(\varphi)$ to $\{0 \} \times \mathbb{R}^{n-3}$, and so we can write $\varphi(x,y) \equiv \varphi_{0}(x)$, where $(x,y) \in \mathbb{R}^n$ with $x \in \mathbb{R}^3$ and $y \in \mathbb{R}^{n-3}$, and where $\varphi_0: \mathbb{R}^3 \rightarrow N$ is a H.D.Z.M. 
\begin{theorem}{4.1}
    $\varphi_0|_{S^2}: S^2 \rightarrow N$ is a $C^\infty$ harmonic map.
\end{theorem}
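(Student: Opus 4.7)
The plan is to handle smoothness and harmonicity separately, exploiting the tension between degree-zero homogeneity and the dimension bound on singular sets in the ambient dimension $3$. First I would show $\text{sing}(\varphi_0) \subset \{0\}$. Since $\varphi_0 : \mathbb{R}^3 \rightarrow N$ is itself energy minimizing (inherited from $\varphi$ by a standard slicing argument in the translation-invariant $y$-directions), the general dimension bound yields $\dim \text{sing}(\varphi_0) \leq 3-3 = 0$, so the singular set consists of isolated points. On the other hand, degree-zero homogeneity $\varphi_0(\lambda x) \equiv \varphi_0(x)$ forces $\text{sing}(\varphi_0)$ to be a cone with vertex at the origin, since regularity and singularity are preserved under positive dilation. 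The only cone consisting of isolated points is $\{0\}$ (or empty), so $\varphi_0 \in C^\infty(\mathbb{R}^3 \setminus \{0\})$, and in particular $\varphi_0|_{S^2}$ is smooth.

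Next I would upgrade smoothness to harmonicity by transporting the harmonic map equation into polar coordinates. Being a smooth energy minimizer on $\mathbb{R}^3 \setminus \{0\}$, $\varphi_0$ satisfies the classical harmonic map system
\[
\Delta \varphi_0 + A(\varphi_0)(D\varphi_0, D\varphi_0) = 0,
\]
where $A$ is the second fundamental form of $N \hookrightarrow \mathbb{R}^p$. Writing $x = r\omega$ with $r > 0$ and $\omega \in S^2$, degree-zero homogeneity gives $\partial_r \varphi_0 \equiv 0$, so $\Delta \varphi_0 = r^{-2}\Delta_{S^2}(\varphi_0|_{S^2})$ and $|D\varphi_0|^2 = r^{-2}|\nabla_{S^2}(\varphi_0|_{S^2})|^2$; more generally every derivative of $\varphi_0$ is purely tangential to $S^2$ and carries a $1/r$, so the nonlinear term picks up the same $r^{-2}$ factor. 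Clearing the common $r^{-2}$ yields exactly the harmonic map equation for $\varphi_0|_{S^2} : S^2 \rightarrow N$.

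The main obstacle I anticipate is the opening claim that $\varphi_0$ inherits the energy-minimizing property from $\varphi$, which is needed before the dimension bound can be applied. This requires verifying that a compactly supported competitor to $\varphi_0$ on a ball $B_R \subset \mathbb{R}^3$ lifts via the trivial $y$-extension to an admissible competitor for $\varphi$ on $\mathbb{R}^n$, an argument that is standard in this literature but deserves explicit mention. By comparison, the cone-plus-isolated-points conclusion is immediate once the dimension bound is in force, and the polar-coordinate reduction is a routine check that relies only on $\partial_r \varphi_0 \equiv 0$ and on the fact that $A$ depends on $\varphi_0$ rather than on $D\varphi_0$.
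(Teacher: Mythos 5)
Your proof is correct, but it reaches the key conclusion $\mathrm{sing}\,\varphi_0 \subset \{0\}$ by a different route than the paper. The paper never descends to $\mathbb{R}^3$: it supposes $\xi \in \mathrm{sing}\,\varphi_0$ with $\xi \neq 0$, uses homogeneity and cylindricity to produce the $(n-2)$-dimensional half-plane $\{(\lambda\xi, y) : \lambda > 0,\ y \in \mathbb{R}^{n-3}\}$ inside $\mathrm{sing}\,\varphi$, and contradicts the bound $\mathcal{H}^{n-2}(\mathrm{sing}\,\varphi) = 0$, which holds because $\varphi$ itself is an energy minimizer on $\mathbb{R}^n$. This sidesteps entirely the ingredient you correctly flag as your main obstacle, namely that the cross-section $\varphi_0$ of a cylindrical minimizer is itself minimizing on $\mathbb{R}^3$; the paper needs no slicing lemma because it applies the measure bound to $\varphi$ rather than the dimension bound to $\varphi_0$. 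Your route is standard and the slicing lemma is available in Simon's book, but it is an extra nontrivial input. One small imprecision: Hausdorff dimension zero does not by itself imply the singular set consists of isolated points; what saves you is that a cone of Hausdorff dimension zero containing a nonzero point would contain a ray of dimension one, so the cone property plus the dimension bound already forces $\mathrm{sing}\,\varphi_0 \subset \{0\}$ without any discreteness claim. On the harmonicity step you are actually more careful than the paper, which asserts smoothness and harmonicity of $\varphi_0|_{S^2}$ without the polar-coordinate reduction $\Delta \varphi_0 = r^{-2}\Delta_{S^2}(\varphi_0|_{S^2})$ and $|D\varphi_0|^2 = r^{-2}|\nabla_{S^2}(\varphi_0|_{S^2})|^2$; writing that out, as you do, is a genuine improvement in completeness.
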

\begin{proof}
    We claim that $\text{sing}\varphi_0=\{0 \}$, which is sufficient to establish the smoothness of $\varphi_0$. First, note that $\text{sing}\varphi_0 \supset \{0\}$, otherwise $\varphi_0$, and by extension, $\varphi$ would be constant, yet $\dim S(\varphi) = n-3$, which is a contradiction. Now suppose that $\xi \in \text{sing}\varphi_0$ and $\xi \neq 0$. By the homogeneity of $\varphi_0$, we have that
    \[
    \{\lambda \xi: \lambda>0 \} \subset \text{sing}\varphi_0 \implies \{(\lambda \xi, y): \lambda>0, y \in \mathbb{R}^{n-3} \} \subset \text{sing}\varphi
    \]
    Note that the set $\{(\lambda \xi, y): \lambda>0, y \in \mathbb{R}^{n-3} \}$ is a half-plane of dimension (n-2), and hence 
    \[
    \mathcal{H}^{n-2}(\text{sing}\varphi) \geq \mathcal{H}^{n-2}(\{(\lambda \xi, y): \lambda>0, y \in \mathbb{R}^{n-3} \})=\infty
    \]
    and thus $\mathcal{H}^{n-2}(\text{sing}\varphi)=\infty$. However, since $\text{sing}(u)$ consists of only isolated points, for any energy minimizing map $u \in W^{1,2}(\Omega; N)$, where $\Omega \subset \mathbb{R}^n$, $\mathcal{H}^{n-2}(\text{sing}(u))=0$. Thus, $\mathcal{H}^{n-2}(\text{sing}\varphi)=\infty$ is a contradiction, so $\xi=0$, which establishes that $\text{sing}\varphi_0 = \{0\}$. Therefore, $\varphi_0|_{S^2}$ is away from the singularity at $0$, so the regularity set is the entire domain $S^2$, so $\varphi_0|_{S^2}: S^2 \rightarrow N$ is smooth and harmonic.
\end{proof}
Next is an important theorem related the boundedness of the $L^2$-norm of the gradient of $\varphi$, with the general finiteness of the gradient of $\varphi_0$ constructed in this way.
\begin{theorem}{4.2}
    If $\varphi^{(j)}$ is any sequence of H.D.Z.M.'s with $\varphi^{(j)}(x,y) \equiv \varphi_0^{(j)}(x)$ for each $j \in [1,\infty)$, and if $\displaystyle\limsup_{j \to \infty}\int_{B_1(0)} |D\varphi^{(j)}|^2 < \infty$, then 
    \[
    \displaystyle\limsup_{j \to \infty}[\sup_{S^2} |D^{\ell}\varphi_0^{(j)}|] < \infty \hspace{0.75cm} \forall \ell \geq 0
    \]
\end{theorem}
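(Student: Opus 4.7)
The plan is to apply the $\varepsilon$-regularity theorem (Theorem 2.1.3) at every point $x_0 \in S^2 \subset \mathbb{R}^3 \times \{0\} \subset \mathbb{R}^n$ with constants uniform in $j$ and in $x_0$, and then exploit the compactness of $S^2$ to pass to a uniform $C^\ell$ bound on the sphere. Since $\varphi^{(j)}(x,y) \equiv \varphi_0^{(j)}(x)$ has no $y$-dependence, the derivatives of $\varphi^{(j)}$ at points of $\mathbb{R}^3 \times \{0\}$ coincide with those of $\varphi_0^{(j)}$ at the corresponding points of $\mathbb{R}^3$, so it will suffice to bound $\sup_{S^2 \times \{0\}} |D^\ell \varphi^{(j)}|$ uniformly in $j$.

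Discarding finitely many indices, I would set $M := \sup_j \int_{B_1(0)} |D\varphi^{(j)}|^2 < \infty$ and establish the first hypothesis of Theorem 2.1.3, the uniform density bound, via homogeneity and monotonicity. For any $x_0 \in S^2$ the inclusion $B_1(x_0) \subset B_2(0)$ together with the HDZM scaling $\int_{B_r(0)} |D\varphi^{(j)}|^2 = r^{n-2} \int_{B_1(0)} |D\varphi^{(j)}|^2$ gives $\int_{B_1(x_0)} |D\varphi^{(j)}|^2 \leq 2^{n-2} M$, and the monotonicity formula upgrades this to $R^{2-n}\int_{B_R(x_0)} |D\varphi^{(j)}|^2 \leq \Lambda := 2^{n-2} M$ for every $R \in (0,1]$. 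For the second hypothesis, compactness of $N$ and its isometric embedding in $\mathbb{R}^p$ yield a uniform pointwise bound $|\varphi^{(j)}| \leq C_N$, whence $R^{2-n} \int_{B_R(x_0)} |\varphi^{(j)} - \lambda_{x_0,R}|^2 \leq C(n,N)\,R^{2}$; with $\varepsilon = \varepsilon(n,\Lambda,N,1/2)$ produced by Theorem 2.1.3, I would then choose $R_0 > 0$ once and for all with $C(n,N)\,R_0^2 < \varepsilon^2$.

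With both hypotheses verified at scale $R_0$ uniformly in $j$ and $x_0 \in S^2$, Theorem 2.1.3 gives $\varphi^{(j)} \in C^\infty(B_{R_0/2}(x_0))$ and the estimate $\sup_{B_{R_0/2}(x_0)} |D^\ell \varphi^{(j)}| \leq C(\ell,\Lambda,N,n,R_0)$, independent of $j$ and of the base point. A finite subcover of the compact set $S^2$ by balls $B_{R_0/2}(x_i)$ with $x_i \in S^2$ then yields the desired uniform bound on $\sup_{S^2} |D^\ell \varphi_0^{(j)}|$ for every $\ell \geq 0$.

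The only real subtlety, and what I expect to be the main obstacle to writing this cleanly, is the order in which the scales are coordinated: the smallness constant $\varepsilon$ of Theorem 2.1.3 depends on $\Lambda$, so $\Lambda$ must be secured \emph{before} any choice of $R_0$ is made. The argument works precisely because the HDZM scaling plus the monotonicity formula together produce a single $\Lambda$ that is valid at \emph{all} scales $R \in (0,1]$ and \emph{all} centers $x_0 \in S^2$ directly from the hypothesis on the unit ball at the origin, so $\varepsilon$ and then $R_0$ can genuinely be pinned down independently of $j$ and $x_0$.
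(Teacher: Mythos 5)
Your overall strategy---uniform $\varepsilon$-regularity at each point of $S^2$ followed by a finite cover---is the right skeleton, and your verification of the first hypothesis (the uniform bound $\Lambda = 2^{n-2}M$ at all scales $R\le 1$ and all centers on $S^2$, via degree-zero homogeneity plus monotonicity) is correct. The gap is in the second hypothesis. You deduce $R^{2-n}\int_{B_R(x_0)}|\varphi^{(j)}-\lambda_{x_0,R}|^2 \le C(n,N)R^2$ from the mere boundedness $|\varphi^{(j)}|\le C_N$ and then shrink $R$. But that computation never uses that $x_0$ lies on $S^2$, away from the singular axis: it holds verbatim at $x_0=0$, so your argument would prove that each $\varphi_0^{(j)}$ is smooth at the origin and, more generally, that every energy minimizer into a compact target is everywhere regular. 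That is false (e.g.\ $x/|x|\colon B^3\to S^2$), so the step cannot stand. The source of the trouble is that the smallness hypothesis in the Schoen--Uhlenbeck theorem is the scale-invariant \emph{mean-square oscillation} $R^{-n}\int_{B_R(x_0)}|u-\lambda_{x_0,R}|^2<\varepsilon^2$ (the exponent $2-n$ in the paper's statement of Theorem 2.1.3 is a typo; note its conclusion is already written with $R^{-n}$). With the correct normalization, your $L^\infty$ bound yields only a constant $C(n,N)$ independent of $R$, which gives nothing.

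The genuinely missing ingredient is how to make the mean oscillation small near $S^2$ uniformly in $j$, and this is where the paper's proof does its real work: compactness. From any subsequence extract a further subsequence $\varphi^{(j')}\to\varphi$ in $L^2_{\mathrm{loc}}$, with $\varphi$ again an H.D.Z.M.\ of the form $\varphi_0(x)$; by Theorem 4.1 the limit is smooth away from $\{0\}\times\mathbb{R}^{n-3}$, hence oscillates by at most $C\sigma$ on $B_\sigma(z)$ for $z$ near $S^2$. Then
\[
\sigma^{-n}\int_{B_\sigma(z)}|\varphi^{(j')}-\lambda_{z,\sigma}|^2 \;\le\; C\,\sigma^{-n}\int_{B_\sigma(z)}|\varphi^{(j')}-\varphi|^2 + C\sigma^2,
\]
which is $<\varepsilon^2$ upon first fixing $\sigma$ small and then taking $j'$ large. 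Only then does $\varepsilon$-regularity (with your uniform $\Lambda$) deliver uniform $C^\ell$ bounds on a neighborhood of $S^2$, and the arbitrariness of the initial subsequence upgrades the conclusion to the stated $\limsup$ bound. Your coordination of $\Lambda$, $\varepsilon$, and $R_0$ is fine as far as it goes, but it cannot substitute for this compactness step.
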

\begin{proof}
    By compactness, there exists a subsequence of $\varphi^{(j)}$, say $\varphi^{(j')}$, such that $\varphi^{(j')}\rightarrow \varphi$, where $\varphi$ is a H.D.Z.M. with $\varphi(x,y)=\varphi_0(x)$, and by the previous theorem, $\varphi_0|_{S^2} \in C^\infty$. Thus, for $z \in \mathbb{R}^n \setminus (\{0 \} \times \mathbb{R}^{n-3})$, that is, a point in $z=(z_1,z_2, ..., z_n) \in \mathbb{R}^n$ such that $(z_1, z_2, z_3) \neq (0,0,0)$, and a positive $\sigma < \text{dist}(z,\{0 \} \times \mathbb{R}^{n-3})$, we have 
    \[
    |\varphi(x)-\varphi(z)| \leq C\sigma < C \text{dist}(z,\{0 \} \times \mathbb{R}^{n-3}) = C\sqrt{z_1^2+z_2^2+z_3^2}
    \]
    by the smoothness of $\varphi$. This formulation admits a kind of Lipschitz continuity. Hence,
    \[
    \int_{B_\sigma(z)}|\varphi^{(j')}-\varphi(z)|^2 \leq \int_{B_\sigma(z)}|\varphi^{(j')}-\varphi|^2+C\sigma^2
    \]
    Now by the $\epsilon$-regularity theorem, and by letting $\sigma$ be sufficiently small, and for $j'$ sufficiently large, the convergence of $\varphi^{(j')}$ to $\varphi$ is with respect to the $C^k$ norm for each $k$ on compact subsets $K \subset \mathbb{R}^n \setminus (\{0 \} \times \mathbb{R}^{n-3})$. In light of the fact that the subsequence $\varphi^{(j')}$ was arbitrary, it follows that for all H.D.Z.M.'s $\varphi(x,y) \equiv \varphi_0(x)$, where $x \in \mathbb{R}^3, y \in \mathbb{R}^{n-3}$, with $\int_{S^2} |D\varphi_0|^2 \leq \Lambda$, we have that 
    \[
    \sup_{S^2}|D^{\ell} \varphi_0|^2 \leq C, \hspace{0.75cm} \ell=1,2,...
    \]
    and $C$ depends on the target manifold $N$, the constant $\Lambda$, and the index $\ell$.
\end{proof}
This theorem will be a powerful resource for investigating the geometric picture of $\text{sing}_{*}(u)$.
\section{The Geometry of \texorpdfstring{$\text{sing}_{*}(u)$}{text}}
For the general framework, let $K \subset \Omega$, and $z \in \text{sing}_{*}u \cap K$, and let $\varphi$ be a tangent map of $u$ at $z$ with $\dim S(\varphi) = n-3$. Also, let $\varphi(x,y)=\varphi_0(x)$, where $x \in \mathbb{R}^3, y \in \mathbb{R}^{n-3}$ be as before. Leveraging the properties of $\text{sing}_{*}(u)$, which includes the existence of a tangent map $\varphi$ of $u$ at every point $z \in \text{sing}_{*}(u)$ that can approximate $u$ in the $L^2$ over smaller and smaller balls around $z$, we have that for $\varphi^{(z)}(x,y) \equiv \varphi((x,y)-z)$, there is a sequence $\rho_j \downarrow 0$ such that
\begin{align}
\displaystyle\lim_{j \to \infty} \rho_j^{-n} \int_{B_{\rho_j}(z)} |u-\varphi^{(z)}|^2 = 0
\end{align}
As a result, for $\rho=\rho_j$ with $j$ sufficiently large, we can control the scaled $L^2$-norm given by
\[
\rho^{-n}\int_{B_\rho(z)} |u-\varphi^{(z)}|^2
\]
by making it as small as we please. From this we obtain the following theorem which describes the geometric picture of $\text{sing}_{*}(u)$. 
\begin{theorem}{5.1}
For any H.D.Z.M.'s $\varphi: \mathbb{R}^n \rightarrow N$, and any ball $B_{\rho_0}(z)$ with $\bar{B}_{\rho_0}(z) \subset \Omega$ we have the estimate
\[
\textrm{sing}(u) \cap B_{\rho/2}(z) \subset {x: \textrm{dist}(x, (z+\{0 \} \times \mathbb{R}^{n-3})) < \delta(\rho)\rho} \hspace{0.5cm} \forall \rho \leq \rho_0
\]
where
\[
\delta(\rho)=C \left( \rho^{-n} \int_{B_\rho(z)} |u-\varphi^{(z)}|^2 \right)^{\frac{1}{n}}
\]
and $C$ only depends on $n,N,\Lambda$, and $\Lambda$ is any upper bound for $\rho_0^{2-n}\int_{B_{\rho_0}(z)}|Du|^2$.
\end{theorem}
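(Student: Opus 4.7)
The plan is a proof by contradiction driven by the $\varepsilon$-regularity theorem (Theorem 2.1.3). The engine supplied by Section 4 is that the tangent map $\varphi^{(z)}$, with $\dim S(\varphi^{(z)}) = n-3$, is $C^{\infty}$ off the axis $z + \{0\}\times\mathbb{R}^{n-3}$ (Theorem 4.1), and quantitatively its energy bound -- which descends from $\Lambda$ via the monotonicity formula and lower-semicontinuity along the tangent-map construction -- forces $\sup_{S^{2}}|D\varphi_{0}^{(z)}| \leq C_{1} = C_{1}(n,N,\Lambda)$ by Theorem 4.2. Degree-zero homogeneity in the $\mathbb{R}^{3}$-factor then upgrades this to the pointwise estimate $|D\varphi^{(z)}(w)| \leq C_{1}/\mathrm{dist}(w,\, z+\{0\}\times\mathbb{R}^{n-3})$ everywhere off the axis.

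After translating so that $z=0$, I would fix a point $x_{0} \in \mathrm{sing}(u)\cap B_{\rho/2}(0)$, set $d := \mathrm{dist}(x_{0},\, \{0\}\times\mathbb{R}^{n-3})$, and apply $\varepsilon$-regularity to $u$ on the ball $B_{r}(x_{0})$ of radius $r := \beta d$ for a parameter $\beta \in (0,1/2]$ to be optimized. Since $B_{r}(x_{0})$ sits at distance $\geq d/2$ from the axis, the gradient estimate above yields the oscillation bound $\sup_{B_{r}(x_{0})}|\varphi^{(z)} - \varphi^{(z)}(x_{0})| \leq 2C_{1}\beta$. Setting $c := \varphi^{(z)}(x_{0})$ and splitting the $L^{2}$-distance into discrepancy plus oscillation gives
\[
r^{-n}\int_{B_{r}(x_{0})}|u-c|^{2} \;\leq\; 2\Bigl(\frac{\rho}{r}\Bigr)^{n} E \;+\; 8C_{1}^{2}\beta^{2}, \qquad E := \rho^{-n}\int_{B_{\rho}(0)}|u - \varphi^{(z)}|^{2},
\]
while the accompanying energy hypothesis $r^{2-n}\int_{B_{r}(x_{0})}|Du|^{2} \leq 2^{n-2}\Lambda$ is automatic from monotonicity and the inclusion $B_{\rho/2}(x_{0}) \subset B_{\rho}(0) \subset B_{\rho_{0}}(0)$.

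Then I would balance $\beta$ so that this right-hand side drops below $\varepsilon^{2}$: the cap $8C_{1}^{2}\beta^{2} \leq \varepsilon^{2}/2$ gives $\beta \leq \beta_{1}(\varepsilon, C_{1})$, while $2(\rho/r)^{n} E \leq \varepsilon^{2}/2$ gives $\beta d \geq \beta_{2}(\varepsilon)\, E^{1/n}\rho$. The two constraints coexist exactly when $d \geq (\beta_{2}/\beta_{1})\, E^{1/n}\rho =: C E^{1/n}\rho$; in that case Theorem 2.1.3 certifies $u \in C^{\infty}$ in a neighborhood of $x_{0}$, contradicting $x_{0} \in \mathrm{sing}(u)$. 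Taking the contrapositive yields $\mathrm{sing}(u)\cap B_{\rho/2}(0) \subset \{x : \mathrm{dist}(x,\, \{0\}\times\mathbb{R}^{n-3}) < CE^{1/n}\rho\}$, which is the claim after undoing the translation.

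The main obstacle I anticipate is the precise balancing of the two error pieces in the display above: the discrepancy term $(\rho/r)^{n}E$ pushes $r$ to be large while the oscillation term $\beta^{2}$ pushes $r$ to be small, and only the choice $\beta d \sim E^{1/n}\rho$ recovers the sharp exponent $1/n$ appearing in $\delta(\rho)$. A subsidiary step is to verify that Theorem 4.2's energy hypothesis genuinely transfers from the approximating rescalings $u_{z,\rho_{j}}$ to the limit tangent map $\varphi^{(z)}$, which follows from lower-semicontinuity of the Dirichlet energy together with the monotonicity formula applied to $u$.
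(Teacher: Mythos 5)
Your proposal is correct and is essentially the paper's own argument: both fix a singular point at distance $d$ from the axis, work on a ball of radius $\beta d$ about it, use the gradient decay $|D\varphi_0(x)|\leq C|x|^{-1}$ from Theorem 4.2 to bound the oscillation of $\varphi^{(z)}$ there by $C\beta$, split the scaled $L^2$ deviation by the triangle inequality into the $u$-versus-$\varphi^{(z)}$ discrepancy plus this oscillation, and choose $\beta$ small enough (depending on $\varepsilon_0(n,N,\Lambda)$) to absorb the oscillation term, yielding $d\leq C E^{1/n}\rho$. The only cosmetic difference is that you phrase it as a contradiction/contrapositive while the paper directly writes the lower bound $\varepsilon_0\leq\sigma^{-n}\int_{B_\sigma(w)}|u-\varphi(w)|^2$ forced by $\varepsilon$-regularity at a singular point.
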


\begin{figure}[H] 
    \centering
    \includegraphics[width=0.5\linewidth]{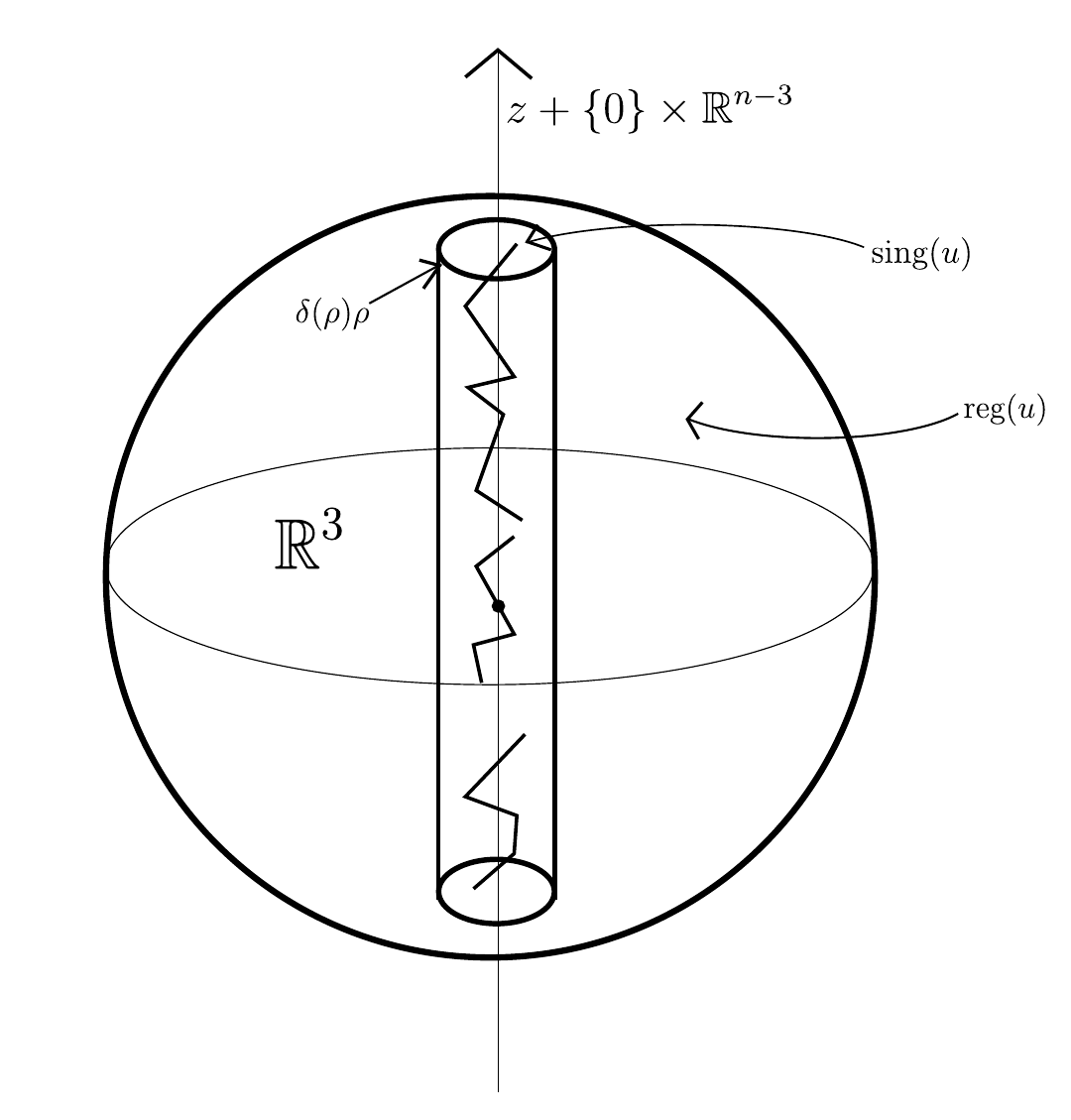}
    \caption{\textit{The Geometric Picture of} $\text{sing}_{*}u$}
    \label{fig:my_label}
\end{figure}

\begin{remark}
In light of the fact that we can control the scaled $L^2$-norm as seen in (2), one might suggest that $\text{sing}_{*}(u)$ in contained in a Lipschitz manifold, if not a $C^1$ manifold of dimension $n-3$. However, given the definition of $S(\varphi)$, $\delta(\rho)$ is small only when $\rho$ is proportionally close to one of the $\rho_j$. Without further information, we cannot say much about the geometric structure of $\text{sing}_{*}(u)$.
\end{remark}

\begin{proof}
Without loss of generality, assume $z=0$. Let $\rho<\rho_0$, and define $w=(\xi,\eta) \in \text{sing}_{*}(u) \cap B_{\rho/2}(0)$. Choose $\sigma=\beta_0|\xi|$, and $\beta_0 \leq \frac{1}{2}$. By the $\epsilon$-regularity theorem, $\exists \epsilon_0=\epsilon_0(n,N,\Lambda)>0$ such that
\begin{align}
\epsilon_0 \leq \sigma^{-n}\int_{B_\sigma(w))}|u-\varphi(w)|^2 \leq 2\sigma^{-n}\int_{B_\sigma(w)}|u-\varphi|^2+2\sigma^{-n}\int_{B_\sigma(w)}|\varphi-\varphi(w)|^2
\end{align}
which follows from triangle inequality. Now by Theorem 4.2, we have that
\[
|D\varphi_0(x)|\leq C|x|^{-1}
\]
and $C$ depends only on $N,\Lambda$, hence
\[
|\varphi(w)-\varphi(x)| \leq C|\xi|^{-1}\sigma \leq C\beta_0
\]
for $x \in B_{\sigma}(w)$. Now by (3):
\[
\epsilon_0 \leq 2\beta_0^{-n}|\xi|^{-n}\int_{B_\rho(0)}|u-\varphi|^2+C\beta_0^2
\]
So by selecting $C\beta_0^2 \leq \frac{1}{2}\epsilon_0$ and multiplying through by $|\xi|^n$ on both sides, we have 
\begin{align*}
    |\xi|^n \leq C(\rho^{-n}\int_{B_\rho(0)}|u-\varphi|^2)\rho^n \\
    \implies |\xi| \leq C(\rho^{-n}\int_{B_\rho(0)}|u-\varphi|^2)^{\frac{1}{n}}
\end{align*}
where $C$ is some constant depending only on $n,N$, and $\Lambda$, which is the inequality we desired to show.
\end{proof}

\end{document}